\newtheorem{thm}{Theorem}[section]
\newtheorem{lem}[thm]{Lemma}
\theoremstyle{definition}
\newtheorem{claim}{Claim}[section]
\newtheorem{defn}{Definition}[section]
\newtheorem{conj}{Conjecture}[section]
\theoremstyle{remark}
\newtheorem{rem}{Remark}
\begin{document}

\title{The variety defined by the matrix of diagonals is $F$-pure}
\author{Zhibek Kadyrsizova}
\affil{{\small Department of  Mathematics, Nazarbayev University \newline
zhibek.kadyrsizova@nu.edu.kz}}
\date{}
\maketitle
\begin{abstract}
We prove that the variety defined by the determinant of the matrix formed by the diagonals of powers of a given matrix  is $F$-pure for matrices of all sizes and in all positive prime characteristics. Moreover, we find a homogeneous system of parameters for it. 
\end{abstract}
\emph{Keywords: } Frobenius, singularities, $F$-purity, system of parameters. 
\section{Introduction and preliminaries}

Let $X=(x_{ij})_{1\leq i, \, j \leq n}$ be a square matrix of size $n$ with indeterminate entries over a field $K$ and $R=K[X] $ be the polynomial ring over $K$ in $\{x_{ij}\, |\, 1\leq i, \, j \leq n\}$. Let $D(X)$ be an $n\times n$ matrix whose $j$th column consists of the diagonal entries of the matrix $X^{j-1}$ written from left to right with the convention that $X^0$ is the identity  matrix of size $n$. Define $\mathcal{P}(X)=\det(D(X))$.  
In \cite{Y11} and \cite{You20}, H-W.Young studies the varieties of nearly commuting matrices and derives their important properties such as the decomposition into irreducible components through the use of the polynomial $\mathcal{P}(X)$. We hope to understand better their  Frobenius singularities and as it can be seen in \cite{K18} this is also closely related to the singularities of the variety defined by $\mathcal{P}(X)$. In this paper we prove that the latter is $F$-pure and find a homogeneous system of parameters for it. Let us first define the necessary preliminaries. 
The following notation is fixed for the rest of the paper.
Let \[X=\left[\begin{array}{cccc}
                                                                 x_{11}&x_{12}&\ldots&x_{1n}\\
                                                                 &\ldots&\ldots&\\
                                                                 x_{n1}&x_{n2}&\ldots&x_{nn} \end{array}\right] \]and \[\tilde{X}=\left[\begin{array}{c|c}
                                                                                                                                                                                                 X_0&0\\
                                                                                                                                                                                                 \hline
                                                                                                                                                                                                0&x_{nn} \end{array}\right] \]
                                   where \[X_0=\left[\begin{array}{cccc}
                                                                 x_{11}&\ldots&x_{1, n-1}\\
                                                                 &\ldots&\\
                                                                 x_{n-1,1}&\ldots&x_{n-1,  n-1} \end{array}\right] .\] 
That is, $\tilde{X}$ is the matrix obtained from $X$ by setting  all the entries of its last column and the last row  to 0 except for the entry $x_{nn}$. Let  $X_0$ be the matrix obtained from $X$ by deleting its last column and last row.
\begin{lem}[\cite{You20}, Lemma 1.3] $\mathcal{P}(X)$ is an irreducible polynomial. \end{lem}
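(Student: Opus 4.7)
My plan is to combine a specialization to diagonal matrices with two group-action arguments.

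First, $\mathcal{P}(X)$ is homogeneous of degree $\binom{n}{2}$, since the $j$-th column of $D(X)$ consists of polynomials of degree $j - 1$. Specializing $x_{ij} \mapsto 0$ for all $i \neq j$ makes $X$ diagonal, so each $X^{j-1}$ has diagonal entries $x_{ii}^{j-1}$, and $D(X)$ becomes the classical Vandermonde matrix. Thus $\mathcal{P}(X)$ specializes to $V := \prod_{1 \leq i < j \leq n}(x_{jj} - x_{ii})$, a squarefree product of pairwise non-associate linear forms. In particular $\mathcal{P}(X)$ is nonzero and squarefree, so any factorization must be of the form $\mathcal{P}(X) = F_1 \cdots F_r$ with the $F_k$ distinct irreducibles.

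Suppose for contradiction $r \geq 2$. The diagonal torus $T = (K^*)^n$ acts on $R$ by conjugation, $t \cdot x_{ij} = t_i t_j^{-1} x_{ij}$, and fixes $\mathcal{P}(X)$. Since $R$ is a UFD and $T$ is connected, each $F_k$ must be $T$-semi-invariant. A monomial of nonzero $T$-weight necessarily involves an off-diagonal variable, so if any $F_k$ had nonzero weight its specialization $F_k|_{\text{off-diag} = 0}$ would vanish, contradicting $\prod_k F_k|_{\text{off-diag} = 0} = V \neq 0$. Hence each $F_k$ is $T$-invariant, and its diagonal specialization is a nonzero positive-degree subproduct of the linear factors of $V$. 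These subproducts partition $\{(i,j) : i < j\}$ into $r$ nonempty blocks $B_1, \ldots, B_r$.

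Next, the symmetric group $S_n$ acts on $R$ by simultaneous row-column permutations, $\sigma \cdot x_{ij} = x_{\sigma(i), \sigma(j)}$; this permutes the rows of $D(X)$ by $\sigma$, so $\sigma \cdot \mathcal{P}(X) = \mathrm{sgn}(\sigma) \mathcal{P}(X)$. Hence $S_n$ permutes the $F_k$ (up to scalar) and correspondingly permutes the blocks $B_k$. For $n \geq 3$, $A_n$ acts transitively on the pairs $\{(i,j) : i < j\}$, so any $A_n$-invariant block is either empty or the full set. For $n \geq 5$, any subgroup of $S_n$ of index less than $n$ contains $A_n$; combined with the orbit analysis this forces $r = 1$, a contradiction.

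\textbf{Main obstacle.} The delicate cases are small $n$, especially $n = 4$, where $S_4 \twoheadrightarrow S_3$ (via the action on the three perfect matchings of $K_4$) allows a hypothetical partition of the six pairs into three blocks of two, corresponding to a factorization of $\mathcal{P}(X)$ into three degree-2 factors. To rule this out I would examine specific off-diagonal monomials: the 3-cycle term $x_{12} x_{23} x_{31}$ appears in $\mathcal{P}(X)$ with a nonzero coefficient (via the $(X^3)_{ii}$ entries), but cannot arise from a product of $T$-invariant degree-2 monomials, each of which has off-diagonal degree $0$ or $2$, so that the total off-diagonal degree of any such product is even, while the 3-cycle has off-diagonal degree $3$. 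The case $n = 2$ is immediate ($\mathcal{P}(X) = x_{22} - x_{11}$ is linear), and an analogous parity argument dispatches $n = 3$.
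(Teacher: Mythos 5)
This lemma is quoted by the paper from \cite{Y11} without proof, so there is no internal argument to compare against; judging your attempt on its own, the overall strategy (Vandermonde specialization, torus weights forcing $T$-invariant factors, $S_n$ permuting the resulting blocks of pairs) is sound, but two steps have genuine gaps. First, the group-theoretic conclusion for $n\geq 5$ does not follow as stated: since the blocks are permuted transitively, the setwise stabilizer of a block has index exactly $r$, so the fact ``index $<n$ implies the subgroup contains $A_n$'' only rules out $2\leq r<n$. The cases $r\geq n$ are untouched; in particular the all-singleton partition, i.e.\ a factorization of $\mathcal{P}$ into $\binom{n}{2}$ $T$-invariant linear forms, which would force $\mathcal{P}=\prod_{i<j}(x_{jj}-x_{ii})$, must be excluded separately (easy, e.g.\ because $\mathcal{P}$ takes the value $\pm 1$ on a matrix with two equal diagonal entries, or because $\mathcal{P}$ contains monomials in off-diagonal variables), and intermediate partitions with $n\leq r<\binom{n}{2}$ need either primitivity of the $S_n$-action on $2$-subsets for $n\geq 5$ (true but neither proved nor cited) or a direct orbit analysis of the block containing $\{1,2\}$ under its stabilizer.

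Second, the $n=4$ parity argument rests on an unverified non-cancellation claim, and the witness you name is not even a monomial of $\mathcal{P}$: monomials of $\mathcal{P}$ have degree $6$, so $x_{12}x_{23}x_{31}$ must be completed, and the completion matters. Tracking the determinant expansion (each row $i$ contributes a closed walk at $i$ of length equal to its column index minus one), the natural completions cancel: the coefficient of $x_{12}x_{23}x_{31}x_{22}x_{44}^2$ is $(+1)+(-1)=0$ (the $3$-cycle can sit in row $1$ or row $3$, with opposite signs), and likewise $x_{12}x_{23}x_{31}x_{14}x_{41}x_{44}$ has coefficient $0$. A valid witness does exist, e.g.\ $x_{12}x_{23}x_{31}x_{22}x_{33}^2$, where the loop constraints force the $3$-cycle into row $1$ and the coefficient is $\pm 1$ (hence nonzero in every characteristic, which you also need to say since the intended application is in characteristic $p$); the analogous check for $n=3$ is needed as well, or one can simply observe that for $n=3$ only the singleton partition survives the index argument and dispose of it as above. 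So the architecture is salvageable, but as written the key non-cancellation step is asserted rather than proved and its most obvious instantiations are false.
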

\begin{lem}[\cite{You20}, Proof of Lemma 1.3] \label{ColP}  $\mathcal{P}(\tilde{X})=\mathcal{P}(X_0)c_{X_0}(x_{nn})$ where $c_{X_0}(t)$ is the characteristic polynomial of $X_0$.  \end{lem}

\begin{rem} The above lemma is also true when we annihilate either only the last column or only the last row of $X$ with the exception of the entry $x_{nn}$. \end{rem}  

\section{A system of parameters}
First, we find a homogeneous system of parameters on $R/(\mathcal{P}(X))$.  To do so we prove several useful lemmas. 
\begin{lem} \label{Powers} Let $A$ be a square matrix of size $n$ with integer entries and with $\det(A)=\pm 1$. Then  the following are equivalent
\begin{enumerate}
\item[(a)] $\mathcal{P}(A)=\pm 1$.
\item[(b)] The diagonals of $I, \, A, \, A^2, \ldots, A^{n-1}$ span $\mathbb{Z}^n$. 
\item[(c)] The diagonals of the elements of $\mathbb{Z}[A, A^{-1}]$ span $\mathbb{Z}^n$. 
\item[(d)] There exist $n$ integer powers of $A$ with the property that their diagonals span $\mathbb{Z}^n$.
\end{enumerate}
\end{lem}
\begin{proof}
The equivalence of (a) and (b) is clear as the columns of the matrix $D(A)$ are the diagonals of $I, \, A, \, A^2, \ldots, A^{n-1}$. 

By Cayley-Hamilton's theorem we have that $A^n \in \sum_{i=0}^{n-1}\mathbb{Z}A^i$ and hence $I \in \sum_{i=1}^{n}\mathbb{Z}A^i$. Therefore, since $A$ is invertible, we also have that $A^{-1} \in \sum_{i=0}^{n-1}\mathbb{Z}A^i$ as well as $A^{-h}$ for all integers $h$. Thus $\mathbb{Z}[A]=\mathbb{Z}[A, A^{-1}]$. 
Finally, \[\det\left[\text{diag}(I) \, \text{diag}(A)\,  \ldots \text{diag}(A^{n-1})\right]=\pm 1 \text{ if and only if}\]
\[\text{diag}(I), \text{diag}(A), \ldots, \text{diag}(A^{n-1}) \text{ span } \mathbb{Z}^n \text{ if and only if}\]
the diagonals of all the integer powers of $A$ span $\mathbb{Z}^n$ if and only if there exist $n$ integer powers of $A$ with the property that their diagonals span $\mathbb{Z}^n$. 

\end{proof}
\begin{lem} \label{P(A)=1}
Let \[A= \left[\begin{array}{ccccc}
                                      1&1&\ldots&1&1\\
                                      1&1&\ldots&1&0\\
                                      &\ldots&\ldots&&\\
                                     1&0&\ldots&0&0\\ \end{array}\right] .\] be a square matrix of size $n\geq 1$ with the property that all the entries  strictly below the main anti-diagonal are 0 and the rest are equal to 1. Then $\mathcal{P}(A)$ is equal to either 1 or -1. \end{lem}

\begin{proof}
First observe that $\det(A)=\pm1$. Hence the matrix is invertible and it can be shown that 
\[B=A^{-1}= \left[\begin{array}{ccccc}
                                      0&0&\ldots&0&1\\
                                      0&0&\ldots&1&-1\\
                                      &\ldots&\ldots&&\\
                                     1&-1&\ldots&0&0\\ \end{array}\right] .\]
$B$ has two non-zero anti-diagonals and the rest of the entries are equal to 0.                                      
To show that $\mathcal{P}(A)=\pm1$ it is necessary and sufficient to show that there exist $n$ powers of $B$ so that their diagonals span $\mathbb{Z}^n$, see Lemma~\ref{Powers}. 
We claim that for this purpose it is sufficient to take $n$ odd powers of $B$.   
\begin{claim} $$B^2= \left[\begin{array}{cccccc}
                                      1&-1&0&\ldots&0&0\\
                                      -1&2&-1&\ldots&0&0\\
                                      &\ldots&\ldots&&\\
                                      0&0&0&\ldots&2&-1\\
                                     0&0&0&\ldots&-1&2\\ \end{array}\right] .$$\end{claim}  
                                     
We show this by induction with the induction step equal to 2. Cases $n=2$ and $n=3$ can be easily verified. \\

Write $B$ as                                                                   
\begin{small}    $$\left[\begin{array}{c|c|c}
  0& 0\ldots 0&1 \\ \hline
  0& &-1\\
  0 & \raisebox{-15pt}{{\large \mbox{{$B_0$}}}}&0\\[-4ex]
  \vdots && \vdots \\
  0&&0\\
  \hline
  1 &-1\,0 \ldots 0&0\\ \end{array} \right] $$\end{small}
  
then  \begin{small}    $$B^2=\left[\begin{array}{c|c|c}
  1& -1\quad 0\ldots\ldots\ldots 0 \quad 0&0 \\ \hline
  -1& &0\\
  0& \raisebox{-15pt}{{\large \mbox{{$B_0^2+\left[\begin{array}{cccc}
                                      1&0&\ldots&0\\
                                      0&0&\ldots&0\\
                                     0&0&\ldots&0\\ \end{array}\right] $}}}} &0 \\[-4ex]
  \vdots && \vdots \\
  0&&-1\\
  \hline
  0 &0\quad 0 \ldots\ldots \ldots 0\, \,  -1&2\\ \end{array}\right]. $$     \end{small}

\begin{claim} For all $ 1\leq j \leq n-1$,   we have that 
\[(B^{2j-1})_{kl}= \left\{\begin{array}{cc}
                                     (-1)^{j+1}& \text { if } k+l=n-j+2, \\ 
                                     (-1)^j & \text { if } k+l=n+j+1,\\
                                     0& \text { if } k+l\leq n-j+1 \text{ or } k+l \geq n+j+2. \end{array}\right.\]     
                                      \end{claim}   
We prove the claim by induction on $j$. When $j=1$, it is true.
Suppose that the claim is true for all integers less than or equal to $j$. 
Then \[B^{2(j+1)-1}=B^{2j+1}=B^2B^{2j-1}.\]
Therefore, \[(B^{2j+1})_{kl}=(B^2B^{2j-1})_{kl}=\sum_{s=1}^{n}(B^2)_{ks}(B^{2j-1})_{sl}=\]
For now assume that $k>1$.
\[(B^2)_{kk}(B^{2j-1})_{kl}+(B^2)_{k,k+1}(B^{2j-1})_{k+1,l}+(B^2)_{k,k-1}(B^{2j-1})_{k-1,l}.\]
Suppose first that $k+l=n-j+1$. Then  
\[(B^{2j+1})_{kl}=(B^2)_{kk}\cdot 0+(-1)(-1)^{j+1}+(-1)\cdot 0=(-1)^{j+2}. \]
Next, suppose that  $k+l=n+j+2$
\[(B^{2j+1})_{kl}=(B^2)_{kk}\cdot 0+(-1)\cdot 0+(-1)(-1)^{j}=(-1)^{j+1}. \]
Finally, consider the case when $k+l\leq n-j \text{ or } k+l \geq n+j+3$. We have that 
\[(B^{2j+1})_{kl}=(B^2)_{kk}\cdot 0+(-1)\cdot 0+(-1)\cdot 0=0. \]
Thus the formula for $B^{2j-1}$ is true.
\begin{rem} The case $k=1$ goes along the same lines as above with the exception when we have that $1+l=n+j+2$. Then $l=n+j+1>n$ and this is not possible. \end{rem}
Now we are ready to finish the proof of the lemma. 
Consider the matrix whose columns are the diagonals of the odd powers of $B$ written from left to right. Observe that in each odd power the main diagonal meets only one of the sub-anti-diagonals that we highlighted above. 
Therefore,  we have that 
\[\left[\text{diag}(B)\, \text{diag}(B^3)\, \ldots \, \text{diag}(B^{2j-1}) \, \ldots \, \text{diag}(B^{2n-1})\right]=\]
\begin{multicols}{2}
for matrices of odd sizes 
\begin{tiny}\[\left[\begin{array}{cccccccccccc}
                                  0&0& 0&0&0&&          0&0&0&1\\
                                  0&0&0&0&0&&           0&1&*&*\\
                                  0&0&0&0&0&&           *&*&*&* \\
                                   &&  \ldots  &&&& \quad &\ldots\\
                                  0&0&0&0&1&&           *&*&*&*  \\
                                  0&0&1&*&*&&             *&*&*&* \\
                                  1&  & *&*&*& \ldots&  *&*&*&* \\
                                  0&1&* &*&*&&            *&*&*&* \\
                                  0&0&0&1&*&&           *&*&*&* \\
                                  && \ldots &&&&& \quad  \ldots \\
                                  0&0&0&0&0&&           *&*&*&* \\
                                  0&0&0&0&0&&           1&*&*&* \\
                                  0&0&0& 0&0&&           0&*&1&* \\
\end{array}\right] ,\] \end{tiny}

and for matrices of even sizes 
 \begin{tiny}\[\left[\begin{array}{cccccccccccc}
                                  0& 0&0&0&&          0&0&0&-1\\
                                  0&0&0&0&&           0&-1&*&*\\
                                  0&0&0&0&&           *&*&*&* \\
                                   &&  \ldots  &&&& \quad &\ldots\\
                                  0&0&0&-1&&           *&*&*&*  \\
                                  0&-1&*&*&&             *&*&*&* \\
                                  -1&* &*&*&&            *&*&*&* \\
                                  0&0&-1&*&&           *&*&*&* \\
                                   \ldots &&&&& \quad  \ldots \\
                                  0&0&0&0&&           *&*&*&* \\
                                  0&0&0&0&&           -1&*&*&* \\
                                  0&0& 0&0&&           0&*&-1&* \\
\end{array}\right] .\] \end{tiny}
\end{multicols}
Since every row of the matrices has a pivot position, we conclude that the determinants of these matrices are not zero and thus the columns of each of them span $\mathbb{Z}^n$. This finishes the proof of the lemma. 
\end{proof}                 
\begin{thm} Let $X=(x_{ij})_{1\leq i, \, j \leq n}$ be a square matrix of size $n$ with indeterminate entries over a field $K$ and $R=K[X] $ be the polynomial ring over $K$ in $\{x_{ij}\, |\, 1\leq i, \, j \leq n\}$. Let \[\Lambda=\left\{(i,j) \, | 1 \leq i\leq n, \, n+1-i \leq j \leq n \right\}\] and \[\Omega=\left\{ (k,l)\, | 1 \leq k\leq n-1, \, 1\leq  l \leq n-k, \, (k,l)\neq (1,1) \right\}.\] 
Then  \[ \mathcal{S}= \left\{ x_{ij},\,  x_{11}-x_{kl} \, |  \, (i,j) \in \Lambda, (k,l) \in \Omega \right \}\]  is a homogeneous system of parameters and hence a regular sequence on $R/(\mathcal{P}(X))$. \end{thm}
\begin{proof}
We prove the lemma by induction on $n$. 

Consider the first few small cases.

Let $n=2$. In this case modulo $(\mathcal{S})$ we have that  \[X= \left[\begin{array}{cc}
                                                   x_{11}&0\\
                                                   0&0 \end{array}\right] \] and \[\mathcal{P}(X)=\det \left[\begin{array}{cc}
                                                                                                                                                     1&x_{11}\\
                                                                                                                                                      1&0 \end{array}\right] =-x_{11}.\] 
                                                                                                                                                      
Let $n=3$. In this case \[X= \left[\begin{array}{ccc}
                                                   x_{11}&x_{11}&0\\
                                                   x_{11}&0&0\\
                                                   0&0&0 \end{array}\right] , \, X^2= \left[\begin{array}{ccc}
                                                                                                                                     2x_{11}^2&x_{11}^2&0\\
                                                                                                                                     x_{11}^2&x_{11}^2&0\\
                                                                                                                                     0&0&0 \end{array}\right] \] and \[\mathcal{P}(X)=\det \left[\begin{array}{ccc}
                                                                                                                                                                                                                                    1&x_{11}&2x_{11}^2\\
                                                                                                                                                                                                                                    1&0&x_{11}^2\\
                                                                                                                                                                                                                                    1&0&0 \end{array}\right] =x_{11}^3. \]
Let $n=4$. In this case \[X= \left[\begin{array}{cccc}
                                                   x_{11}&x_{11}&x_{11}&0\\
                                                   x_{11}&x_{11}&0&0\\
                                                   x_{11}&0&0&0\\
                                                   0&0&0&0 \end{array}\right] , \, X^2= \left[\begin{array}{cccc}
                                                                                                                                     3x_{11}^2&2x_{11}^2&x_{11}^2&0\\
                                                                                                                                     2x_{11}^2&2x_{11}^2&x_{11}^2&0\\
                                                                                                                                     x_{11}^2&x_{11}^2&x_{11}^2&0\\
                                                                                                                                     0&0&0&0 \end{array}\right] , \]

\[X^3= \left[\begin{array}{cccc}
                                           6x_{11}^3&5x_{11}^3&3x_{11}^3&0\\
                                           5x_{11}^3&4x_{11}^3&2x_{11}^3&0\\
                                            3x_{11}^3&2x_{11}^3&x_{11}^3&0\\
                                           0&0&0&0 \end{array}\right] ,                                                                                                                                            
                                                                                                \text{ and } \mathcal{P}(X)=\det \left[\begin{array}{cccc}
                                                                                                                                                1&x_{11}&3x_{11}^2&6x_{11}^3\\
                                                                                                                                                1&x_{11}&2x_{11}^2&4x_{11}^3\\
                                                                                                                                                1&0&x_{11}^2&x_{11}^3\\
                                                                                                                                                1&0&0&0 \end{array}\right] =-x_{11}^6.\]                                                                                                                                                                                                                                                                                                                                                                                          

\begin{claim} $\mathcal{P}(X)=\pm x_{11}^{n(n-1)/2}$ in the quotient ring of $R$ by the ideal generated by \[\{x_{ij}, x_{11}-x_{kl}\, | \, (i,j) \in \Lambda, (k,l) \in \Omega\} \]  \end{claim}

\begin{rem} In our proof of the claim we do not establish exactly the sign of the image of $\mathcal{P}(X)$. We only show that it is either 1 or -1. \end{rem}
Consider the matrices $X$, $X_0$, $\tilde{X}$, and set the elements strictly below the main anti-diagonal of $X_0$ and $x_{nn}$ to 0, that is, \[\tilde{\tilde{X}}= \left[\begin{array}{ccccc|c}
                                                                                                                                 x_{11}&x_{12}&\ldots&x_{1, n-2}&x_{1, n-1}&0\\
                                                                                                                                 x_{21}&x_{22}&\ldots&x_{2, n-2}&0&0\\
                                                                                                                                 &\ldots&\ldots&&0&0\\
                                                                                                                                 x_{n-1,1}&0&\ldots&0&0&0\\
                                                                                                                                 \hline
                                                                                                                                 0&0&\ldots&0&0&0 \end{array}\right] .\]
                                                                                                                                 
Since $P(\tilde{X})=P(X_0)c_{X_0}(x_{nn})$ where $c_{X_0}$ is the characteristic polynomial of $X_0$ (see Lemma~\ref{ColP}), we have that $P(\tilde{\tilde{X}})=P(\tilde{X_0})(-1)^{(n-2)(n-1)/2}\prod_{i=1}^{n-1}x_{i, n-i}$. 
Kill the elements $x_{11}-x_{kl}$ for all $(k,l) \in \Omega$. Then $P(\tilde{X_0})=x_{11}^{(n-1)(n-2)/2}P(Y_0)$, where  
\[Y_0= \left[\begin{array}{ccccc}
                                      1&1&\ldots&1&1\\
                                      1&1&\ldots&1&0\\
                                      &\ldots&\ldots&&\\
                                     1&0&\ldots&0&0\\ \end{array}\right] .\]
Hence $$P(\tilde{\tilde{X}})=x_{11}^{(n-1)(n-2)/2}P(Y_0)(-1)^{(n-2)(n-1)/2}\prod_{i=1}^{n-1}x_{11}=\pm x_{11}^{n(n-1)/2}P(Y_0). $$                                                 
By Lemma~\ref{P(A)=1} we have that $P(Y_0)=\pm1$, which finishes the proof of the claim.

Finally, we have that $R/(\mathcal{P}(X), \mathcal{S})\cong K[x_{11}]/(x_{11}^{n(n-1)/2})$, which has Krull dimension 0. Hence, $\mathcal{S}$ is indeed a system of parameters on $R/(\mathcal{P}(X))$ and, since the ring is a complete intersection, it is also a regular sequence. 
\end{proof}

\section{The variety defined by $\mathcal{P}(X)$ is $F$-pure}
\begin{defn} Let $S$ be a ring with positive prime characteristic $p$. Then $S$ is called $F$-pure if  the Frobenius endomorphism $F: S \rightarrow S$ with $F(s)=s^p$ is pure, that is,  for every $S$-module $M$ we have that $F\otimes_S \text{id}_M: S\otimes M \rightarrow S\otimes M$ is injective. \end{defn} 
Next is Fedder's criterion specialized for hypersurfaces and we use it to prove $F$-purity of $R/(\mathcal{P}(X))$. 
\begin{lem}  (Fedder's criterion, \cite{F83}, Proposition 2.1) \label{FedC}Let $S$ be a polynomial ring over a field $K$ of positive prime characteristic $p$ and let $f \in S$ be a homogeneous polynomial. Then $S/(f)$ is $F$-pure if the polynomial $f^{p-1}$ has a non-zero monomial term in which every indeterminate has degree at most $p-1$.  \end{lem}
We also need the fact that $F$-purity deforms for Gorenstein rings. 
\begin{lem} (\cite{F83}, Theorem 3.4(2)) Let $S$ be a Gorenstein ring and let $x \in S$ be a non-zero divisor. Then if $S/xS$ is $F$-pure, then so is $S$.   \end{lem}
For more examples of $F$-pure rings and other related notions the reader may refer to \cite{F87}, \cite{FW89}, \cite{HR76}, \cite{BH98}. Computer algebra system Macaulay2, \cite{GS}, is a great tool in studying rings and their properties. \\

Let $\mathcal{S}_{\Lambda}=\{ x_{ij} : 1\leq i\leq n,\,  n-i+1\leq j \leq n\}\subseteq  \mathcal{S} $. These are the entries of the matrix $X$ on and below the main anti-diagonal. It has been shown in the previous section that $\mathcal{S}_{\Lambda}$ is part of a homogeneous system of parameters and a regular sequence on $R/(\mathcal{P}(X))$.



\begin{thm} Let $K$ be a field of positive prime characteristic $p$. Then $R/(\mathcal{P}(X))$ is $F$-pure for all $n\geq 1$. \end{thm}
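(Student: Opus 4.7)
The plan is to verify Fedder's criterion \cite{F87}: since $R/(\mathcal{P}(X))$ is a hypersurface in the regular ring $R$, it is $F$-pure if and only if $\mathcal{P}(X)^{p-1}\notin \mathfrak{m}^{[p]}$, where $\mathfrak{m}=(x_{ij}:1\leq i,j\leq n)$ and $\mathfrak{m}^{[p]}=(x_{ij}^{p}:1\leq i,j\leq n)$. It therefore suffices to exhibit a single monomial that appears with nonzero coefficient in $\mathcal{P}(X)^{p-1}$ and in which every variable has exponent at most $p-1$.

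The candidate is $A^{p-1}$, where $A=\prod_{i+j\leq n} x_{ij}$ is the squarefree product of the entries of $X$ strictly above the anti-diagonal. Each exponent of $A^{p-1}$ equals $p-1$, so $A^{p-1}\notin \mathfrak{m}^{[p]}$. Since no variable of $A$ lies in $S$, any monomial of $\mathcal{P}(X)^{p-1}$ that is $S$-free can only arise from products of $S$-free terms of $\mathcal{P}(X)$; hence the coefficient of $A^{p-1}$ in $\mathcal{P}(X)^{p-1}$ equals its coefficient in $\mathcal{P}(\bar X)^{p-1}$, where $\bar X$ is $X$ with every variable of $S$ set to zero. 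Because $\bar X$ has identically zero last row and last column, Lemma~\ref{ColP} (applied with $x_{nn}$ also set to zero) gives
$$\mathcal{P}(\bar X) \;=\; (-1)^{n-1}\det(Y)\,\mathcal{P}(Y),$$
where $Y$ is the upper-left $(n-1)\times(n-1)$ block of $\bar X$: that is, $Y_{ij}=x_{ij}$ when $i+j\leq n$ and $0$ otherwise. The permutation condition $i+\sigma(i)\leq n$ for all $i$ forces $\sigma(i)=n-i$, so only one term of $\det(Y)$ survives, giving $\det(Y)=\pm\prod_{i=1}^{n-1}x_{i,n-i}$, a single signed monomial in the anti-diagonal entries of $Y$.

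Raising to the $(p-1)$st power, $\mathcal{P}(\bar X)^{p-1}$ is a nonzero scalar multiple of $\prod_{i}x_{i,n-i}^{p-1}\cdot\mathcal{P}(Y)^{p-1}$, while $A^{p-1}=\prod_{i}x_{i,n-i}^{p-1}\cdot (A')^{p-1}$ with $A'=\prod_{i+j\leq n-1}x_{ij}$. Thus the coefficient of $A^{p-1}$ in $\mathcal{P}(\bar X)^{p-1}$ equals, up to a nonzero scalar, the coefficient of $(A')^{p-1}$ in $\mathcal{P}(Y)^{p-1}$. Since $(A')^{p-1}$ contains none of the anti-diagonal entries of $Y$, this coefficient is unchanged upon setting those entries to zero; the resulting matrix is exactly the version of $\bar X$ for the $(n-1)\times(n-1)$ problem. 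An induction on $n$, with the trivial base cases $n\leq 2$, then shows that the coefficient is nonzero. The main technical obstacle is the factorization $\mathcal{P}(\bar X)=(-1)^{n-1}\det(Y)\mathcal{P}(Y)$ itself, which is a careful specialization of Lemma~\ref{ColP}; once it is verified with the correct sign, the induction and the Fedder-criterion conclusion are routine.
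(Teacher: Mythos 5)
Your proposal is correct, and its computational core coincides with the paper's: the same specialization of Lemma~\ref{ColP} at $x_{nn}=0$, the same observation that the surviving block forces the unique permutation $\sigma(i)=n-i$ so that an anti-diagonal monomial factors off, and the same induction on $n$ reducing to the strictly-above-anti-diagonal entries. Where you genuinely diverge is in how Fedder's criterion enters. The paper first kills the regular sequence $S$ of on-and-below-anti-diagonal entries, checks $F$-purity of $R/(\mathcal{P}(X),S)$, and then climbs back up using the fact that $F$-purity deforms for Gorenstein (here complete intersection) rings \cite{S99b}; its induction tracks only the coefficient of the squarefree monomial $\prod_{i+j\le n}x_{ij}$ in $\mathcal{P}(X)$ modulo $S$. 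You instead apply Fedder's criterion \cite{F87} directly to the hypersurface $R/(\mathcal{P}(X))$ and propagate, through the factorization, the coefficient of $A^{p-1}$ in the $(p-1)$st power itself. This buys two things: you avoid the deformation theorem altogether, and you address explicitly the possible cancellation in $\mathcal{P}(X)^{p-1}$ --- the point that a unit coefficient on a squarefree term of $f$ does not by itself force $f^{p-1}\notin\mathfrak{m}^{[p]}$ --- by noting that an $S$-free (respectively anti-diagonal-free) monomial of a power can only come from $S$-free (anti-diagonal-free) terms of the factors, which is exactly the homogeneity-plus-factorization argument needed to make the Fedder step airtight. The paper's route, in exchange, works with the smaller, cleaner polynomial $\mathcal{P}(X)$ modulo $S$ (a form of degree equal to its number of variables) and illustrates the deformation technique; yours is more self-contained. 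The only loose ends in your write-up are minor: the sign in $\mathcal{P}(\bar X)=(-1)^{n-1}\det(Y)\mathcal{P}(Y)$ depends on the convention for the characteristic polynomial but is harmless since only a unit is needed, and the base case $n=1$ is vacuous ($\mathcal{P}(X)=1$), with $n=2,3$ giving coefficient $\pm1$ as required.
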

\begin{proof} We use the fact that  $F$-purity deforms for Gorenstein rings, \cite{F83}, and we have that $R/(\mathcal{P}(X))$ is a complete intersection and hence is Gorenstein. Therefore, it is sufficient to show that we have an $F$-pure ring once we take the quotient of the ring $R/(\mathcal{P}(X))$ by the ideal generated by the regular sequence $\mathcal{S}_{\Lambda}$.  

Let us first take a look at what we have for few small values of $n$. 

Case $n=1$ is trivial. $\mathcal{P}(X)=1$ and $R/(\mathcal{P}(X))=0$. 

Case $n=2$: $\mathcal{P}(X)=\det \left[\begin{array}{cc}
                                                   1&x_{11}\\
                                                   1&x_{22} \end{array}\right] =x_{22}-x_{11}$.  Then $R/(\mathcal{P}(X))\cong K[x_{11}, x_{12}, x_{21}]$ is regular and hence $F$-pure. 
                                                   
                                                 
                                                                                                                                                                                                                      
Now we are ready to prove the general statement. We do it by induction on $n$. 

First observe the following: if our statement is true for a fixed $n$, that is, if $R/(\mathcal{P}(X), \mathcal{S}_{\Lambda})$ is $F$-pure, then so is $R/(\mathcal{P}(X), \mathcal{S}_{\Lambda}^0)$, where $$\mathcal{S}_{\Lambda}^0=\{ x_{ij} : 2\leq i\leq n,\,  n-i+2\leq j \leq n\}\subseteq \mathcal{S}_{\Lambda},$$ that is, the entries strictly below the main anti-diagonal. Moreover, if a particular monomial term of $\mathcal{P}(X)$ has a nonzero coefficient in $R/(\mathcal{S}_{\Lambda})$, then it is a monomial term  of $\mathcal{P}(X)$ with a nonzero coefficient in $R/(\mathcal{S}_{\Lambda}^0)$. A partial converse is also true. If $\mathcal{P}(X)$ in $R/(\mathcal{S}_{\Lambda}^0)$ has a nonzero monomial term in the entries of $X$ which are strictly above the main anti-diagonal, then so does $\mathcal{P}(X)$ in $R/(\mathcal{S}_{\Lambda})$. 

The first non-trivial case is $n=3$.  Let $\hat{X}$ be the matrix $X$ modulo the elements of $\mathcal{S}_{\Lambda}^0$, that is, 
                                               \[\hat{X}=\left[\begin{array}{ccc}
                                                                 x_{11}&x_{12}&x_{13}\\
                                                                 x_{21}&x_{22}&0\\
                                                                 x_{31}&0&0 \end{array}\right]\] and \[P(\hat{X})=\det\left[\begin{array}{ccc}
                                                                                                                                                                                      1&x_{11}&x_{11}^2+x_{12}x_{21}x_{13}x_{31}\\
                                                                                                                                                                                      1&x_{22}&x_{22}^2+x_{12}x_{21}\\
                                                                                                                                                                                      1&0& x_{13}x_{31}\end{array}\right]=\]
\[=-x_{11}x_{13}x_{31}+x_{11}x_{12}x_{21}-x_{12}x_{21}x_{22}+x_{11}x_{22}^2-x_{11}^2x_{22}.\]

Since $P(\hat{X})$ has a monomial term $x_{11}x_{12}x_{21}$ with a coefficient $1$ modulo $p$, so does $\mathcal{P}(X)$ in $R/(\mathcal{S}_{\Lambda})$.   

Here is our induction hypothesis: for all $k<n$ we have that $\mathcal{P}(X)^{p-1}$ in $R/(\mathcal{S}_{\Lambda})$ and in $R/(\mathcal{S}_{\Lambda}^0)$ has a monomial term $\prod_{i=1}^{k-1}\prod_{j=1}^{k-i}x_{ij}^{p-1}$ with coefficient $\pm1$ modulo $p$. In other words, this monomial term is the $(p-1)^{\text{st}}$ power of the product of the entries of $X$ strictly above the main anti-diagonal.  The basis of the induction is verified above. 

Now consider the matrices $X_0$, $\tilde{X}$ and $\tilde{\tilde{X}}$. As in the Theorem 1, since $P(\tilde{X})=P(X_0)c_{X_0}(x_{nn})$ where $c_{X_0}$ is the characteristic polynomial of $X_0$ (see Lemma~\ref{ColP}) we have that $P(\tilde{\tilde{X}})=P(\tilde{X_0})(-1)^{(n-2)(n-1)/2}\prod_{i=1}^{n-1}x_{i, n-i}$. 
By induction hypothesis  $P(\tilde{X_0})^{p-1}$ has a monomial term  $\prod_{i=1}^{n-2}\prod_{j=1}^{n-1-i}x_{ij}^{p-1}$ with coefficient $\pm1$.

Hence  $P(\tilde{\tilde{X}})^{p-1}$ has a monomial term $\prod_{i=1}^{n-1}\prod_{j=1}^{ n-i}x_{ij}^{p-1}$ with coefficient $\pm1$.  Therefore, by Fedder's criterion we have that $R/(\mathcal{P}(X), \mathcal{S}_{\Lambda})$ is $F$-pure and thus so is $R/(\mathcal{P}(X))$.                                                                                                                                                                                                 
\end{proof}

The next natural question that one can consider is whether the variety defined by the polynomial $\mathcal{P}(X)$ is $F$-regular. It is certainly true when $n=1$ and $n=2$, but is unknown for larger values of $n$. 

\begin{conj}
Let $K$ be a field of positive prime characteristic $p$. Then $R/(\mathcal{P}(X))$ is $F$-regular for all $n\geq 1$. 
\end{conj}

\section{Acknowledgement}

The author is grateful to Mel Hochster for valuable discussions and comments and thanks the referee for useful suggestions on improving the paper. 
\bibliographystyle{amsalpha}       
\bibliography{mybib}

\end{document}